\renewcommand{\th}{\theta}
\newcommand{\ph}{\phi}
\def\ph{\phi}
\def\md#1{\ \mbox{\rm(mod }{#1})}
\def\nph#1{N_{\ph}(#1)}
\def\npp#1{N_{\ph}^+(#1)}
\def\ol{\overline}
\def\ph{\phi}
\newcommand{\Q}{{\mathbb Q}}
\newcommand{\Z}{{\mathbb Z}}
\newcommand{\F}{{\mathbb F}}
\def\fph{\mathbb{F}_{\ph}}
\def\md#1{\ \mbox{\rm(mod }{#1})}
\def\nph#1{N_{\ph}(#1)}
\def\npp#1{N_{\ph}^+(#1)}
\def\ol{\overline}
\newcommand{\al}{{\alpha}}
\newcommand{\p}{\mathfrak{p}}
\newtheorem{theorem}{Theorem}[section]
\newtheorem{lemma}[theorem]{Lemma}
\newtheorem{cor}[theorem]{Corollary}
\theoremstyle{definition}
\theoremstyle{remark}
\newtheorem{rem}[theorem]{Remark}
\begin{document}
\title[]{ On index divisors and monogenity of certain number fields defined by trinomials}
\textcolor[rgb]{1.00,0.00,0.00}{}
\author{Lhoussain El Fadil}\textcolor[rgb]{1.00,0.00,0.00}{}
\address{Faculty of Sciences Dhar El Mahraz, P.O. Box  1796 Atlas-Fes , Sidi mohamed ben Abdellah University,  Morocco}\email{lhoussain.elfadil@usmba.ac.ma,\,\, lhouelfadil2@gmail.com}
\keywords{ Power integral basis; trinomials; theorem of Ore; prime ideal factorization; common index divisor} \subjclass[2010]{11R04, 11R21, 11Y40}
\date{March 14, 2022}
\maketitle
\vspace{0.3cm}
\begin{abstract}  
 Let  $K$ be a  number field generated  by a  root  $\th$ of a monic irreducible trinomial  $F(x) = x^n+ax^{m}+b \in \Z[x]$. 
In this paper, we study the problem of $K$.  More precisely, we provide some explicit conditions on $a$,  $b$, $n$, and $m$ for which $K$ is not monogenic. As  applications, we show that there are infinite families of non-monogenic number fields defined by trinomials of degree $n=2^r\cdot3^k$ with $r$ and $k$ two positive integers.  We also give   infinite families of non-monogenic sextic number fields defined by trinomials. Some  illustrating examples are giving at the end of this paper.  
\end{abstract}
\maketitle
%\tableofcontents

\section{Introduction}
Let $K$ be a number field of degree $n$ with ring of integers $\Z_K$, and $d_K$ its absolute discriminant.
It is well-know that $\Z_K$ is a free Abelian group of rank $n$. Thanks to a classification theorem of free Abelian groups of finite rank, the  index $(\Z_K:\Z[\th])$ is finite and called the index of $\Z[\th]$.
The number field $K$ is called {\it monogenic} if $\Z_K$ admits a $\Z$-basis of type $(1,\al,\ldots,\al^{n-1})$ for some  $\al\in\Z_K$.
Monogenity of number fields is a classical problem of algebraic number theory, going back to Dedekind, Hasse, and Hensel (see for instance \cite{Ha,  He, G19}). 
For any integral primitive element $\al$ of $K$,
we denote by 
\[
ind(\al)=(\Z_K:\Z[\al])
\]
the index of $\al$.
As it is known \cite{G19}, we have the following index formula:
\[
|\triangle(\al)|=ind(\al)^2\cdot |d_K|
\]
where $\triangle(\al)$ is the discriminant of the minimal polynomial of  $\al$ over $\Q$.

It is clear  that   $ind(\al)=1$  for some integral primitive element $\al$ of $K$ if and only if $(1,\al,\ldots, \al^{n-1})$ is a power integral basis
of $\Z_K$. 

The problem of testing the monogenity of number fields and constructing generators of power integral bases have been intensively studied during the last four decades mainly by Ga\'al, Gy\"ory, Nakahara, pohst and their collaborators, see for instance \cite{AN, 13a, P}.
Especially a delicate and intensively studied problem is the monogenity of pure number fields. Recently, many authors are interested on monogenity of number fields defined by trinomials.
In \cite{AK, SK}, Khanduja et al. studied the integral closedness of some number fields defined by trinomials. Their results are refined by Ibarra et al. (see \cite{Smt}).  Remark that Khanduja's results given in \cite{AK, SK}, can only decide on the integral closedness of $\Z[\al]$, but cannot gave a complete answer to the monogenity problem of  $K$. In \cite{Lsc, Lac, LPh, LD}, Jones et al. introduced  a new notion monogenity, namely monogenity  of irreducible polynomials.  According to Jones  definition, if a polynomial $F(x)$ is monogenic, then $\Q(\al)$ is monogenic, but the converse is not true because  a number field generated by a  root  of a non monogenic polynomial can be monogenic. Therefore Jones's and Khanduja's results  cover partially the study of the monogenity of number fields.
Davis and Spearman \cite{DA} studied the index of quartic number fields $K$ generated by a root of 
such a quartic  trinomial $F(x)=x^4+ax+b\in\Z[x]$.
They gave necessary and sufficient conditions on $a$ and $b$, which charaterize when a prime $p$ is a common index divisor of $K$ for $p=2,3$. 
Their method is based on the calculation of the $p$-index form of $K$.
In  \cite{Ga21}, for a sextic number field $K$ defined by a trinomial $F(x)=x^6+ax^3+b\in \Z[x]$, Ga\'al  calculated all possible generators of power integral bases of $K$. In \cite{jnt6}, El Fadil extended Ga\'al's  studies  by providing some cases where $K$ is not monogenic. In \cite{BF}, Ben Yakkou and El Fadil gave  sufficient conditions on coefficients of a trinomial which guarantee the non-monogenity of the  number field defined by such a trinomial. Also in  \cite{com5}, for every prime integer $p$, El Fadil  gave necessary and sufficient conditions  on $a$ and $b$ which characterize when  $p$ is a common index divisor of $K$, where $K$ is a number field defined by  an irreducible trinomial $F(x)=x^5+ax^2+b\in \Z[x]$. 
Recall that the index of a  number field $K$ is defined by $i(K)=GCD((\mathbb{Z}_K:\mathbb{Z}[\theta])\mid K=\mathbb{Q}(\theta) \mbox{ and } \theta\in \Z_K)$. A rational prime integer $p$ dividing $i(K)$ is called a prime common index divisor of $K$. If $\mathbb{Z}_K$ has a power integral basis, then $i(K)=1$. Therefore a field having a prime common index divisor is not monogenic.
In this paper, for certain number  fields  $K= \Q(\th)$ generated by a   root  $\th $ of a non monogenic  irreducible   trinomial   $F(x)= x^n +ax^m+b$,
 we  give some sufficient conditions on $a$ and $b$, which guarantee the non-monogenity  of $K$.  
Our proposed results extend the special  case when $m=1$, which  has been previously studied by Ben Yakkou and El fadil studied  \cite{BF}.  
\section{Main Results}
 Throughout this section $K=\Q(\th)$ is a number field generated  by a complex  root  $\th$ of a monic irreducible trinomial of  type  $F(x) = x^n + ax^m +b \in \Z[x]$ with $m<n$ two natural numbers . 
 Let $p$ be  a natural prime integer, $\F_p$ the finite field with $p$ elements. For every $t \in \Z$, let $\nu_p(t)$ be the $p$-adic valuation of $t$  and  $t_p=\frac{t}{p^{\nu_p(t)}}$. For any $ t\in \Z$ and positive rational integers $f$, $s$, we  denote by $N_p(f)$ the number of  monic irreducible polynomials of degree $f$ in $\F_p[x]$ and $N_p(f,s,t)$ the number of monic irreducible factors of degree $f$ of the polynomial $x^s+\overline{t}$ in $\F_p[x]$.
 In this paper, we give some  sufficient conditions  on $a$, $b$, $n$, and $m$ for which $i(K)>1$, and hence $K$ is  not monogenic.  
\begin{lemma}\label{dn1}
	Let  $p$ be a natural prime integer which divides
	 both $a$ and $n$, and  does not divide $b$.  Let $\mu = \nu_p(a)$,  $\nu =\nu_p(b^{p-1}-1)$, and  $n = s\cdot p^r$ with $p \nmid s$. If for some positive integer $f$, one of the following conditions holds:
\begin{enumerate} 
\item $\mu < \min\{ \nu, r+1\}$ and $N_p(f)<\mu N_p(f,s,b)$, 
\item  $\nu < \min\{ \mu, r+1\}$ and $N_p(f)<\nu N_p(f,s,b)$, 
\item  $ r+1  \le \min\{ \nu, \mu\}$ and $N_p(f)<(r+1) N_p(f,s,b)$, 
\end{enumerate}	
 	then  $p$ divides $i(K)$. In particular $K$ is not monogenic.
\end{lemma}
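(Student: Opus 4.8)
The plan is to determine the factorization of $p$ in $\Z_K$ by means of Newton polygons together with the theorem of Ore, and then to apply the classical common index divisor criterion. The governing idea is that each of the three quantities $\mu$, $\nu$ and $r+1$ controls the relevant height of a Newton polygon, so that in every case the number of primes above $p$ of residue degree $f$ becomes so large that no single element can furnish a power integral basis.

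First I would reduce $F$ modulo $p$. Since $p\mid a$ and $p\nmid b$ we have $\overline{F}(x)=x^n+\overline{b}$ in $\F_p[x]$, and because $n=s\,p^r$ and $\overline{b}^{\,p^r}=\overline{b}$ in $\F_p$, this equals $(x^s+\overline{b})^{p^r}$. As $p\nmid s$ and $p\nmid b$, the polynomial $x^s+\overline{b}$ is separable, hence splits into distinct monic irreducible factors; by the very definition of $N_p(f,s,b)$ there are exactly $N_p(f,s,b)$ such factors of each degree $f$. Fix one of them, say $\phi$ of degree $f$, together with a monic lift $\phi\in\Z[x]$.

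Next, for each such $\phi$ I would compute the $\phi$-Newton polygon $N_\phi(F)$ of $F$ with respect to $p$. The multiplicity of $\phi$ in $\overline{F}$ is $p^r$, so the principal part of $N_\phi(F)$ has horizontal length $p^r$. The decisive step is to estimate the valuation of the lowest $\phi$-adic coefficient, which for a trinomial reduces to quantities of the form $\nu_p\big((c^s)^{p^r}+b\big)$, where $c$ is a root of $\phi$. Writing $(c^s)^{p^r}+b=\big((c^s)^{p^r}-(-b)^{p^r}\big)+\big((-b)^{p^r}+b\big)$ and using the lifting-the-exponent lemma (which gives $\nu_p(b^{p^r-1}-1)=\nu_p(b^{p-1}-1)=\nu$) shows that the three competing heights are precisely $\mu=\nu_p(a)$, coming from the term $ax^m$; the value $\nu$, coming from $b$; and $r+1$, coming from $\nu_p(n)=r$. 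In each of the three cases of the statement the height realised by the polygon is $g:=\min\{\mu,\nu,r+1\}$, equal respectively to $\mu$, to $\nu$ and to $r+1$. I would then verify that $N_\phi(F)$ is $p$-regular, i.e.\ its residual polynomials are separable, so that Ore's theorem applies and attaches to $\phi$ exactly $g$ prime ideals of $\Z_K$ lying over $p$, each of residue degree $f$.

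Summing over the $N_p(f,s,b)$ irreducible factors of degree $f$, the number of primes of $\Z_K$ above $p$ of residue degree $f$ is $g\cdot N_p(f,s,b)$. Finally I would invoke the common index divisor criterion: if some $\alpha\in\Z_K$ generated $K$ with $p\nmid(\Z_K:\Z[\alpha])$, then by Dedekind's theorem the reduction modulo $p$ of its minimal polynomial would be a product of \emph{distinct} monic irreducible factors of $\F_p[x]$, exactly one of degree $f$ for each prime of residue degree $f$; hence the number of such primes could not exceed $N_p(f)$. Since the hypothesis gives $N_p(f)<g\cdot N_p(f,s,b)$, no such $\alpha$ exists, so $p$ divides $(\Z_K:\Z[\alpha])$ for every generator $\alpha$, whence $p\mid i(K)$ and $K$ is not monogenic. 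The main obstacle is the Newton polygon computation itself: one must show, uniformly across the three regimes, that the polygon is regular with the stated height $g$ and separable residual polynomials, so that Ore's theorem yields the \emph{exact} count of primes of residue degree $f$ rather than a mere divisibility statement; the valuation estimate for $\nu_p\big((c^s)^{p^r}+b\big)$ via lifting-the-exponent, and the case analysis identifying $\min\{\mu,\nu,r+1\}$ as the governing height, are where the real work lies.
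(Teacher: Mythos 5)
Your proposal matches the paper's own proof essentially step for step: the paper likewise factors $\ol{F(x)}=(x^s+\ol{b})^{p^r}$ in $\F_p[x]$, computes the $\ph$-Newton polygon of $F$ for each monic lift $\ph$ of an irreducible factor of $x^s+\ol{b}$ (Lemma \ref{NP}, using $\nu_p\left(\binom{p^r}{j}\right)=r-\nu_p(j)$ and the competing valuations $\mu$, $\nu$, $r+1$ exactly as you describe), concludes that each such $\ph$ of degree $f$ contributes $\min\{\mu,\nu,r+1\}$ primes above $p$ of residue degree $f$, and finishes with the criterion $P_p(f)>N_p(f)$ of Lemma \ref{comindex}, whose standard Dedekind-based justification you reproduce. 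The one piece of ``real work'' you defer---verifying regularity so that Ore's theorem gives an exact count---is handled in the paper almost for free: the lattice points $(p^j,r-j)$ force every side of the polygon to have degree one, so each residual polynomial is linear and automatically furnishes one prime of residue degree $f$ per side (cf.\ Remark \ref{remore}), with no separate separability check needed.
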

 The following  theorem gives  an  infinite family  of non-monogenic   number fields defined by trinomials of degrees $2^k \cdot 3^r$.

\begin{theorem}\label{cordn1}
Let $K$ be a number field generated by a root $\al$ of  an irreducible trinomial $F(x)=x^{2^k \cdot 3^r}+ax^m+b\in \Z[x]$ with  $k$, $r$, and $m$ three natural integers such that $m+1\le  2^k \cdot 3^r$. If one of the following conditions holds:
\begin{enumerate}
\item
$k \neq 2$, $\nu_3(a)\ge 2 \mbox{ and } \, b \equiv -1 \md {9}$,
\item
 $k = 2$, $\nu_3(a)\ge 2 \mbox{ and } \, b \equiv \pm 1 \md {9}$,
\end{enumerate}
then $3$ divides $i(K)$. \\
In particular, any one of these conditions guarantees the non monogenity of $K$.
\end{theorem}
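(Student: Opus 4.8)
The plan is to derive the theorem as a direct application of Lemma \ref{dn1} with the prime $p=3$. First I would verify the three divisibility hypotheses of the lemma. Since $\nu_3(a)\ge 2$ we have $3\mid a$; since $r\ge 1$ and $n=2^k\cdot 3^r$ we have $3\mid n$; and since $b\equiv\pm 1\pmod 9$ forces $b\not\equiv 0\pmod 3$, we have $3\nmid b$. In the notation of the lemma this gives $s=2^k$ (prime to $3$), the same exponent $r$, and $\mu=\nu_3(a)\ge 2$. The hypothesis $m+1\le 2^k\cdot 3^r$ guarantees $m<n$, so $F$ is of the form to which the lemma applies.

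Next I would compute $\nu=\nu_3(b^{2}-1)$. Writing $b^2-1=(b-1)(b+1)$, the condition $b\equiv -1\pmod 9$ gives $\nu_3(b+1)\ge 2$ and $\nu_3(b-1)=0$, while $b\equiv 1\pmod 9$ gives $\nu_3(b-1)\ge 2$ and $\nu_3(b+1)=0$; in either case $\nu\ge 2$. Hence $\min\{\mu,\nu,r+1\}\ge 2$, so whichever of the three items of Lemma \ref{dn1} is in force, the multiplier attached to $N_3(f,s,b)$ is at least $2$, and it suffices to exhibit a single degree $f$ with $N_3(f)<2\,N_3(f,2^k,b)$.

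The heart of the argument is the factorization of $x^{2^k}+\overline{b}$ over $\F_3$. For $k\ge 1$ every $x\in\F_3^{\times}$ satisfies $x^{2^k}=1$, so when $b\equiv -1\pmod 9$ (that is $\overline{b}=2$) both nonzero residues are roots of $x^{2^k}+\overline{b}$, giving two linear factors and $N_3(1,2^k,b)=2$; since $N_3(1)=3<4$ the required inequality holds with $f=1$. This settles item (1) for all $k\ne 2$ as well as the subcase $b\equiv -1\pmod 9$ of item (2). For the remaining subcase $k=2$, $b\equiv 1\pmod 9$ (so $\overline{b}=1$), I would use the explicit splitting $x^4+1=(x^2+x+2)(x^2+2x+2)$ into two distinct irreducible quadratics over $\F_3$, whence $N_3(2,4,b)=2$; since $N_3(2)=(3^2-3)/2=3<4$, the inequality holds with $f=2$. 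Applying the relevant item of Lemma \ref{dn1} then gives $3\mid i(K)$, and since a prime common index divisor obstructs the existence of a power integral basis, $K$ is not monogenic.

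The step I expect to be the main obstacle is the factor count $N_3(f,2^k,b)$, and in particular explaining why only $k=2$ survives when $b\equiv 1\pmod 9$. Here the roots of $x^{2^k}+1$ are the primitive $2^{k+1}$-th roots of unity, so the irreducible factors have degree $d_k$ equal to the multiplicative order of $3$ modulo $2^{k+1}$, and there are $2^k/d_k$ of them. One has $d_1=d_2=2$ but $d_k=2^{k-1}\ge 4$ for $k\ge 3$, and the threshold $N_3(d_k)<2\cdot(2^k/d_k)$ is met precisely at $k=2$: for $k=1$ there is only one quadratic factor, and for $k\ge 3$ the factors are of degree at least $4$, where $N_3(d_k)$ is far too large. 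I would also record the mild bookkeeping point that, because $\min\{\mu,\nu,r+1\}\ge 2$ in every configuration, all three items of Lemma \ref{dn1} reduce here to the single inequality $N_3(f)<2\,N_3(f,2^k,b)$.
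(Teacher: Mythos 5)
Your proposal is correct and takes essentially the same route as the paper's proof: both apply Lemma \ref{dn1} with $p=3$, observe that $\mu$, $\nu$, and $r+1$ are all at least $2$, and then check $N_3(1)=3<4\le 2N_3(1,2^k,b)$ when $b\equiv -1\pmod{9}$ and $N_3(2)=3<4=2N_3(2,4,b)$ via the splitting of $x^4+1$ into two irreducible quadratics over $\F_3$ (your $(x^2+x+2)(x^2+2x+2)$ is the paper's $(x^2+x-1)(x^2-x-1)$) when $k=2$ and $b\equiv 1\pmod{9}$. Your closing computation with orders of $3$ modulo $2^{k+1}$, explaining why only $k=2$ survives for $b\equiv 1\pmod 9$, is sound additional motivation not present in the paper.
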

\begin{rem}
Under the hypotheses of Theorem \ref{cordn1}, if  $\nu_3(a)=1$ or  $b\not\in\{ 1,-1\}\md{9}$, then $3$ does not divide $(\Z_K:\Z[\al])$, and so $3$ does not divide $i(K)$.
\end{rem}
  The following  theorem gives  an  infinite family  of non-monogenic  sextic number fields defined by trinomials.
\begin{theorem}\label{d6}
Let $K=\Q(\th)$ be a sextic	 number field generated by a  root 	 of a monic irreducible trinomial $x^6+ax^m+b$  with   $m\le 5$ a natural integer. Then the following statements hold:
\begin{enumerate}
\item
  If $a\equiv 0 \md 9$ and $b \equiv -1 \md 9$, then $3$ divides $i(K)$.
\item  
 If $a\equiv 0 \md 8$ and $b \equiv -1 \md 4$, then  $2$ divides $i(K)$.
\end{enumerate}
 In particular, if one of these conditions holds, then $K$ is not monogenic.
\end{theorem}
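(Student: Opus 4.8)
The plan is to prove both assertions through the Engström--Hensel criterion underlying Lemma \ref{dn1}: a rational prime $p$ divides $i(K)$ as soon as, for some residue degree $f$, the number of prime ideals of $\Z_K$ lying above $p$ with residue degree $f$ strictly exceeds $N_p(f)$, the number of monic irreducible polynomials of degree $f$ in $\F_p[x]$. In each case I first reduce $F$ modulo $p$ (the hypothesis $p\mid a$, $p\nmid b$ forces $\ol F=x^6+\ol b$), factor it, and then run Ore's theorem via the $\phi$-Newton polygons attached to the irreducible factors $\phi$ of $\ol F$ to count primes of each residue degree.

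For part (1) I would simply invoke Lemma \ref{dn1} with $p=3$. Indeed $3\mid a$, $3\mid 6$, and $b\equiv-1\md 9$ gives $3\nmid b$; writing $6=2\cdot 3$ we have $s=2$, $r=1$, while $\mu=\nu_3(a)\ge 2$ and $\nu=\nu_3(b^2-1)=\nu_3(b+1)\ge 2$ because $b\equiv-1\md 9$. Hence $r+1=2\le\min\{\mu,\nu\}$, so condition (3) applies. Since $\ol b=-1$ in $\F_3$, we have $x^2+\ol b=x^2-1=(x-1)(x+1)$, so $N_3(1,2,b)=2$ and $(r+1)N_3(1,2,b)=4>3=N_3(1)$. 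Therefore $3\mid i(K)$. (This is also the special case $k=r=1$ of Theorem \ref{cordn1}.)

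Part (2) is the substantive one, because the quantity $\nu=\nu_2(b^{2-1}-1)=\nu_2(b-1)=1$ produced by Lemma \ref{dn1} is too small for $p=2$: for the prime $2$ the left endpoint of the Newton polygon is governed by $\nu_2(b+1)$ rather than by $\nu_2(b-1)$, so I would argue directly. Over $\F_2$ one has $\ol F=x^6+1=(x+1)^2(x^2+x+1)^2$. I focus on $\phi=x^2+x+1$, with residue field $\F_4=\F_2(\omega)$, $\omega^2+\omega+1=0$. A short division gives the $\phi$-adic expansion of $x^6+b$ as $c_0=1+b$, $c_1=2(x-1)$, $c_2=-3x$, $c_3=1$; since $\nu_2(a)\ge 3$, the term $ax^m$ only contributes in valuation $\ge 3$ and is therefore invisible to the principal part of the $\phi$-polygon, \emph{independently of $m$}. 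Writing $v$ for the $2$-adic Gauss valuation, the polygon thus has vertices $\big(0,v(c_0)\big)$, $(1,1)$, $(2,0)$, where $v(c_0)=2$ exactly when $b\equiv 3\md 8$ and $v(c_0)\ge 3$ exactly when $b\equiv 7\md 8$ (recall $b\equiv-1\md 4$).

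It then remains to read off the primes. If $v(c_0)=2$ the polygon is a single side of slope $-1$ and length $2$, whose residual polynomial computes to $\omega y^2+\omega^2 y+1\in\F_4[y]$; this is separable with the two distinct roots $1$ and $\omega^2$, so Ore's theorem yields two primes of residue degree $2$. If $v(c_0)\ge 3$ the polygon breaks into two sides of horizontal length $1$, each contributing one prime of residue degree $2$. In either case $\phi$ produces at least two primes above $2$ of residue degree $2$, whereas $N_2(2)=1$; hence $2\mid i(K)$. The main obstacle is exactly this residual-polynomial computation for $p=2$ together with the verification that the factorization is $\phi$-regular (the residual polynomial separable), so that first-order Newton polygons suffice and no second-order analysis is required; as a consistency check I would confirm $\sum_i e_i f_i=6$ using the contribution of the remaining factor $x+1$.
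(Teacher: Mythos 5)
Your proposal is correct and takes essentially the same route as the paper: part (1) is handled through Lemma \ref{dn1} (exactly the case $k=r=1$ of Theorem \ref{cordn1}, which is how the paper disposes of it), and part (2) uses the identical expansion $F=\phi_2^3-3x\phi_2^2+2(x-1)\phi_2+(1+b)+ax^m$ with $\phi_2=x^2+x+1$, the same case split $\nu_2(b+1)=2$ versus $\nu_2(b+1)\ge 3$, and the same separable residual polynomial $\ol{x}\,y^2+\ol{(x+1)}\,y+1$, producing two primes of residue degree $2$ above $2$ against $N_2(2)=1$. Your explicit observation of \emph{why} Lemma \ref{dn1} cannot be invoked at $p=2$ (since $\nu_2(b^{p-1}-1)=\nu_2(b-1)=1$, whereas the polygon is governed by $\nu_2(b+1)$) is a useful clarification that the paper leaves implicit, but the substance of the argument coincides with the paper's proof.
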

\begin{rem}
Theorem \ref{d6}  extends the results given by Jakhar and Kumar  in \cite{A6}, where $m=1$ has been previously studied. 
\end{rem}
\section{Preliminaries}
	In order to   prove our main Theorems,  we recall some fundamental techniques on prime ideal factorization and calculation of index. 
  Let  $\ol{F(x)}=\prod_{i=1}^r \ol{\ph_i(x)}^{l_i}$ in  $\F_p[x]$ be the factorization of $\ol{F(x)}$ into powers of monic irreducible coprime polynomials of $\F_p[x]$. Recall that  a theorem of  Dedekind says that if 
 $$ p \mbox{  does not divide the index } (\Z_K:\Z[\al]), \mbox{ then } p\Z_K=\prod_{i=1}^r \p_i^{l_i}, \mbox{ where every } \p_i=p\Z_K+\phi_i(\al)\Z_K$$  and the residue degree of $\p_i$ is $f(\p_i)={\mbox{deg}}(\phi_i)$ ( see \cite[ Chapter I, Proposition 8.3]{Neu}).
 In order to apply this  theorem in an effective way,
  in $1878$, Dedekind  showed  the well known Dedekind's criterion which allows to test whether  $p$ divides or not  $(\Z_K: \Z[\al])$, see for instance \cite[Theorem 6.1.4]{Co} and \cite{R}.
 When Dedekind's criterion fails, that is,  $p$ divides the index $(\Z_K:\Z[\th])$ for every primitive element $\th\in\Z_K$ of $K$,  then for such primes and number fields, it is not possible to obtain the prime ideal factorization of $p\Z_K$ by {Dedekind's theorem}.
In 1928, Ore developed   an alternative approach
for obtaining the index $(\Z_K:\Z[\alpha])$, the
absolute discriminant, and the prime ideal factorization of the rational primes in
a number field $K$, under some conditions of $p$-regularity of $F(x)$, by using Newton polygons (see for instance {\cite{EMN, MN, O}}). 
	Now we recall  some fundamental {facts about   Newton polygons}. For more details, we refer to \cite{El} and \cite{GMN}.\\
	
	For any prime integer $p$ and for any monic polynomial 
	$\phi\in
	\mathbb{Z}[x]$  whose reduction is irreducible  in
	$\mathbb{F}_p[x]$, let $\mathbb{F}_{\phi}$ be the finite field $\mathbb{F}_p[x]/(\overline{\phi})$. For any monic polynomial $F(x)\in \mathbb{Z}[x]$, upon to the Euclidean division by successive powers of $\phi$, we expand $F(x)$ as $F(x)=a_0(x)+a_1(x)\phi(x)+\cdots+a_l(x)\phi(x)^l$, called the $\phi$-expansion of $F(x)$ (for every $i=0,\dots,l,\,\mbox{deg}(a_i(x))<\mbox{deg}(\phi))$. To any coefficient $a_i(x)\neq 0$, we attach the $p$-adic value $u_i=\nu_p(a_i(x))\in \mathbb{Z}$. The $\phi$-Newton polygon of $F(x)$ with respect to $p$, is the lower boundary convex envelope of the set of points $\{(i,u_i),\, a_i(x)\neq 0\}$ in the Euclidean plane, which we denote by $\nph{F}$. The $\phi$-Newton polygon of $F$, is the process of joining the obtained edges  $S_1,\dots,S_r$ ordered by   increasing slopes, which  can be expressed as $\nph{F}=S_1+\dots + S_r$.  The principal $\phi$-Newton polygon of ${F}$,
	denoted $\npp{F}$, is the part of the  polygon $\nph{F}$, which is  determined by joining all sides of negative  slopes.
For every side $S$ of $\npp{F}$, { the length of $S$, denoted $l(S)$, is the length of its projection to the $x$-axis and  its height, denoted $H(S)$, is the length of its projection to the $y$-axis}. {Let $d=$GCD$(l(S), H(S))$, called the  degree of $S$ and $e=\frac{l(S)}{d}$ called the ramification index of $S$.
			For every side $S$ of {$\npp{F}$}, with initial point $(s, u_s)$ and length $l$, and for every 
	$i=0, \dots,l$, we attach   the following
	{{\it residue coefficient} $c_i\in\mathbb{F}_{\phi}$ as follows:
		$$c_{i}=
		\left
		\{\begin{array}{ll} 0,& \mbox{ if } (s+i,{\it u_{s+i}}) \mbox{ lies strictly
			above } S,\\
		\left(\dfrac{a_{s+i}(x)}{p^{{\it u_{s+i}}}}\right)
		\,\,
		\mod{(p,\phi(x))},&\mbox{ if }(s+i,{\it u_{s+i}}) \mbox{ lies on }S.
		\end{array}
		\right.$$
		where $(p,\phi(x))$ is the maximal ideal of $\mathbb{Z}[x]$ generated by $p$ and $\phi$.
	Let $-\lambda=-h/e$ be the slope of $S$, where  $h$ and $e$ are two positive coprime integers. Then  $d=l/e$ is the degree of $S$. Since 
	the points  with integer coordinates lying{ on} $S$ are exactly $${(s,u_s),(s+e,u_{s}-h),\cdots, (s+de,u_{s}-dh)},$$ if $i$ is not a multiple of $e$, then 
	$(s+i, u_{s+i})$ does not lie on $S$, and so $c_i=0$. Let
	{$$R_{\lambda}(F)(y)=t_dy^d+t_{d-1}y^{d-1}+\cdots+t_{1}y+t_{0}\in\mathbb{F}_{\phi}[y],$$}} called  
the residual polynomial of $F(x)$ associated to the side $S$, where for every $i=0,\dots,d$,  $t_i=c_{ie}$.
\begin{rem}
Notice that as $(s,u_s)$ and $(s+de, u_s-dh)$ lie on $S$, we have $t_dt_0\neq 0$ in $\fph$. Thus $R_{\lambda}(F)(y)$ is of degree $d$ and $y$ does not divide $R_{\lambda}(F)(y)$.
\end{rem}
Let $\npp{F}=S_1+\dots + S_t$ be the principal $\phi$-Newton polygon of $F$ with respect to $p$.\\
We say that $F$ is a $\phi$-regular polynomial with respect to $p$, if  $R_{\lambda_i}(y)$ is square free in $\mathbb{F}_{\phi}[y]$ for every  $i=1,\dots,t$. 
The polynomial $F$ is said to be  $p$-regular  if $\overline{F(x)}=\prod_{i=1}^r\overline{\phi_i}^{l_i}$ for some monic polynomials $\phi_1,\dots,\phi_r$ of $\mathbb{Z}[x]$ such that $\overline{\phi_1},\dots,\overline{\phi_r}$ are irreducible pairwise coprime polynomials in $\mathbb{F}_p[x]$ and    $F(x)$ is  a $\phi_i$-regular polynomial with respect to $p$ for every $i=1,\dots,r$.
\smallskip
	The  theorem of Ore plays  a  key role for proving our main Theorems.\\
	Let $\phi\in\mathbb{Z}[x]$ be a monic polynomial, with $\overline{\phi(x)}$ is irreducible in $\mathbb{F}_p[x]$. As defined in \cite[Def. 1.3]{EMN},   the $\phi$-index of $F(x)$, denoted by $ind_{\phi}(F)$, is  deg$(\phi)$ times the number of points with natural integer coordinates that lie below or on the polygon $\npp{F}$, strictly above the horizontal axis,{ and strictly beyond the vertical axis} (see $Figure\ 1$).\\	
	\begin{figure}[htbp] 
		\centering
		\begin{tikzpicture}[x=1cm,y=0.5cm]
		\draw[latex-latex] (0,6) -- (0,0) -- (10,0) ;
		\draw[thick] (0,0) -- (-0.5,0);
		\draw[thick] (0,0) -- (0,-0.5);
		\node at (0,0) [below left,blue]{\footnotesize $0$};
		\draw[thick] plot coordinates{(0,5) (1,3) (5,1) (9,0)};
		\draw[thick, only marks, mark=x] plot coordinates{(1,1) (1,2) (1,3) (2,1)(2,2)     (3,1)  (3,2)  (4,1)(5,1)  };
		\node at (0.5,4.2) [above  ,blue]{\footnotesize $S_{1}$};
		\node at (3,2.2) [above   ,blue]{\footnotesize $S_{2}$};
		\node at (7,0.5) [above   ,blue]{\footnotesize $S_{3}$};
		\end{tikzpicture}
		\caption{\large  $\npp{F}$.}
	\end{figure}
	In the example of $Figure\ 1$, $ind_\phi(F)=9\times$deg$(\phi)$.\\
	\smallskip
	
	Now assume that $\overline{F(x)}=\prod_{i=1}^r\overline{\phi_i}^{l_i}$ is the factorization of $\overline{F(x)}$ in $\mathbb{F}_p[x]$, where every $\phi_i\in\mathbb{Z}[x]$ is a monic polynomial, with $\overline{\phi_1},\dots,\overline{\phi_r}$ are irreducible pairwise coprime polynomials over $\mathbb{F}_p$.
	For every $i=1,\dots,r$, let  $N_{\phi_i}^+(F)=S_{i1}+\dots+S_{ir_i}$ be the principal  $\phi_i$-Newton polygon of $F$ with respect to $p$. For every $j=1,\dots, r_i$,  let $R_{\lambda_{ij}}(F)(y)=\prod_{s=1}^{s_{ij}}\psi_{ijs}^{a_{ijs}}(y)$ be the factorization of $R_{\lambda_{ij}}(F)(y)$ in $\mathbb{F}_{\phi_i}[y]$. 
	Then we have the following  theorem of index of Ore:
	\begin{theorem}\label{ore}$($Theorem of Ore$)$ 
		\begin{enumerate}
			\item 
			$$\nu_p((\mathbb{Z}_K:\mathbb{Z}[\alpha]))\geq\sum_{i=1}^{r}ind_{\phi_i}(F).$$ 
			The equality holds if $F(x) \text{ is }p$-regular.
			\item 
			If $F(x) \text{ is }p$-regular, then
			$$p\mathbb{Z}_K=\prod_{i=1}^r\prod_{j=1}^{r_i}\prod_{s=1}^{s_{ij}}\mathfrak{p}_{ijs}^{e_{ij}}$$
is the factorization of $p\Z_K$ into product of powers of prime ideals of $\Z_K$, 
			where $e_{ij}$ is the ramification degree of the side $S_{ij}$ and $f_{ijs}=\deg(\phi_i)\times \deg(\psi_{ijs})$ is the residue degree of $\mathfrak{p}_{ijs}$ over $p$ for every $(i,j,s)$.
		\end{enumerate}
	\end{theorem}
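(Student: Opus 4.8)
The plan is to prove the theorem $p$-adically, by converting the Newton-polygon data into the complete factorization of $F$ over $\Q_p$ and then reading off ramification indices, residue degrees, and the index. Throughout I write $\Z_p,\Q_p$ for the $p$-adic integers and numbers and extend $\nu_p$ to a fixed algebraic closure of $\Q_p$. I would begin from the standard fact that $p\Z_K=\prod_{\mathfrak p}\mathfrak p^{e(\mathfrak p)}$, where the primes $\mathfrak p$ over $p$ are in bijection with the monic irreducible factors of $F$ in $\Z_p[x]$, each $e(\mathfrak p)$ and residue degree $f(\mathfrak p)$ being those of the corresponding local extension $K_{\mathfrak p}/\Q_p$. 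Thus part $(2)$ reduces to producing, from the polygons $\npp{F}$ and the residual polynomials, the full list of $\Q_p$-irreducible factors of $F$ with their invariants, and part $(1)$ reduces to computing $\nu_p\big((\Z_K:\Z[\alpha])\big)$ locally.

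The first two steps are Ore's theorem of the product and theorem of the polygon. By Hensel's lemma the factorization $\ol{F(x)}=\prod_{i=1}^r\ol{\phi_i}^{\,l_i}$ in $\F_p[x]$ lifts to $F=\prod_{i=1}^r G_i$ in $\Z_p[x]$ with $\ol{G_i}=\ol{\phi_i}^{\,l_i}$ and $\deg G_i=l_i\deg\phi_i$; this separates the primes over $p$ according to the $\ol{\phi_i}$ they reduce to, and one checks that the $\phi_i$-Newton polygon of $F$ coincides with that of $G_i$, so $\npp{F}$ (with respect to $\phi_i$) governs exactly the factor $G_i$. Writing $\npp{F}=S_{i1}+\dots+S_{ir_i}$ for each $i$, the theorem of the polygon then refines $G_i=\prod_{j}G_{ij}$ in $\Z_p[x]$, where $G_{ij}$ has $\phi_i$-Newton polygon equal to the single side $S_{ij}$ of slope $-\lambda_{ij}=-h_{ij}/e_{ij}$. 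The argument is the usual Newton-polygon separation: the values $\nu_p(\phi_i(\theta))$ for roots $\theta$ of $G_{ij}$ all equal $\lambda_{ij}$, which forces the value group of each attached prime to contain $h_{ij}/e_{ij}$, so $e_{ij}$ divides its ramification index; regularity will pin this down to equality.

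The heart of the matter is the theorem of the residual polynomial. For the single-slope factor $G_{ij}$, the residual polynomial $R_{\lambda_{ij}}(F)(y)\in\fph[y]$ records the first non-trivial residual datum of the roots. Factoring $R_{\lambda_{ij}}(F)(y)=\prod_{s=1}^{s_{ij}}\psi_{ijs}^{\,a_{ijs}}(y)$ into irreducibles over $\fph$, one shows $G_{ij}=\prod_{s}G_{ijs}$ in $\Z_p[x]$, where the residual polynomial of $G_{ijs}$ is a power of $\psi_{ijs}$. When $F$ is $p$-regular every $R_{\lambda_{ij}}$ is separable, so each $a_{ijs}=1$; in that case each $G_{ijs}$ is already irreducible over $\Q_p$, its ramification index is exactly $e_{ij}$, and its residue field is $\fph[y]/(\psi_{ijs})$, of degree $\deg\phi_i\cdot\deg\psi_{ijs}$ over $\F_p$. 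Assembling over all $(i,j,s)$ yields precisely $p\Z_K=\prod_{i,j,s}\mathfrak p_{ijs}^{\,e_{ij}}$ with $f_{ijs}=\deg\phi_i\cdot\deg\psi_{ijs}$, which is part $(2)$.

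Finally, for part $(1)$ I would invoke the discriminant–index relation $\nu_p(\triangle(\alpha))=2\,\nu_p\big((\Z_K:\Z[\alpha])\big)+\nu_p(d_K)$ together with Ore's theorem of the discriminant, which expresses $\nu_p(\triangle(\alpha))$ through the polygons and residual polynomials; comparing this with $\nu_p(d_K)$ read off from the factorization just obtained gives $\nu_p\big((\Z_K:\Z[\alpha])\big)\ge\sum_{i=1}^r ind_{\phi_i}(F)$, where $ind_{\phi_i}(F)$ is $\deg\phi_i$ times the number of integer lattice points under $\npp{F}$. In the $p$-regular case the discriminant computation is exact, so equality holds. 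The main obstacle is the theorem of the residual polynomial: one must show that $R_{\lambda_{ij}}$ faithfully detects the $\Q_p$-irreducible constituents of $G_{ij}$ and that separability of $R_{\lambda_{ij}}$ is exactly what prevents any further (higher-order) splitting. This is where the regularity hypothesis is essential, and where, in its absence, the lattice-point count can only bound the index from below rather than compute it.
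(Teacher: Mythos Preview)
The paper does not prove this statement at all: Theorem~\ref{ore} is quoted as the classical \emph{Theorem of Ore}, with the references \cite{EMN,MN,O,GMN} given for its proof, and is then used as a black box in the arguments that follow. There is therefore no ``paper's own proof'' to compare against.

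That said, your sketch is essentially the standard proof one finds in those references (particularly \cite{MN} and \cite{GMN}): reduce to $\Q_p$, use Hensel to separate according to the irreducible factors $\overline{\phi_i}$, apply the theorem of the polygon to split each piece by slope, then the theorem of the residual polynomial to split each single-slope piece by the irreducible factors of $R_{\lambda_{ij}}$, and finally read off $e$ and $f$ in the $p$-regular case. Your identification of the regularity hypothesis as precisely what forces each $G_{ijs}$ to be irreducible (so that no higher-order Montes iteration is needed) is the right diagnosis. For part~(1), the route via the discriminant--index relation works, but the more direct argument in the cited sources computes $\nu_p\big((\Z_K:\Z[\alpha])\big)$ by counting lattice points under the polygon through an explicit $\Z_p$-basis of the local maximal order built from the $\phi_i$-expansions; either way one obtains the inequality in general and equality under $p$-regularity. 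Nothing in your outline is wrong, but be aware that in this paper the theorem is invoked, not established.
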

\begin{cor}\label{cor1}
	Under the hypotheses above  Theorem \ref{ore}, if for every $i=1,\dots,r,\,l_i=1\text{ or }N_{\phi}^+(F)=S_i$ has a single side of height $1$, then $\nu_p((\mathbb{Z}_K:\mathbb{Z}[\alpha]))=0$.
\end{cor}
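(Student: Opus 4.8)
The plan is to check that each of the two alternatives forces the corresponding factor $\phi_i$ to be regular and to contribute nothing to the index, so that the equality case of Theorem \ref{ore} yields a vanishing sum. Concretely, I would verify that for every $i$ we have both that $F$ is $\phi_i$-regular and that $ind_{\phi_i}(F)=0$; combining these over all $i$ makes $F$ a $p$-regular polynomial with $\sum_{i=1}^r ind_{\phi_i}(F)=0$, and Theorem \ref{ore}(1) then gives $\nu_p((\Z_K:\Z[\al]))=0$.

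First I would record the standard facts about the shape of the principal polygon that I need. For a factor $\phi_i$ with $\overline{\phi_i}\mid\overline{F}$, the horizontal length of $N_{\phi_i}^+(F)$ equals the multiplicity $l_i=\mathrm{ord}_{\overline{\phi_i}}(\overline{F})$, its right endpoint is the point $(l_i,0)$, and its left endpoint is a point $(0,u_0)$ with $u_0\ge 1$. The last fact holds because the constant term $a_0(x)$ of the $\phi_i$-expansion satisfies $\overline{a_0}\equiv\overline{F}\md{\overline{\phi_i}}$ with $\deg\overline{a_0}<\deg\overline{\phi_i}$, so $\overline{\phi_i}\mid\overline{F}$ forces $\overline{a_0}=0$ and hence $u_0=\nu_p(a_0)\ge 1$.

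Next I would dispose of the two cases. If $l_i=1$, then $N_{\phi_i}^+(F)$ is the single segment joining $(0,u_0)$ to $(1,0)$; its degree is $\gcd(1,u_0)=1$, so $R_{\lambda_i}(F)(y)$ has degree $1$ and is trivially square free, whence $F$ is $\phi_i$-regular. Since this segment meets the vertical line $x=1$ at height $0$, no lattice point $(x,y)$ with $x\ge 1$ and $y\ge 1$ lies on or below it, so by the definition of the $\phi_i$-index we get $ind_{\phi_i}(F)=0$. If instead $N_{\phi_i}^+(F)=S_i$ is a single side of height $1$, then $S_i$ joins $(0,1)$ to $(l_i,0)$, so its degree is $\gcd(l_i,1)=1$ and again $R_{\lambda_i}(F)(y)$ has degree $1$, hence is square free and $F$ is $\phi_i$-regular. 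For every $x\ge 1$ the side lies strictly below height $1$, so once more there is no lattice point $(x,y)$ with $x\ge 1$ and $y\ge 1$ on or below $S_i$, giving $ind_{\phi_i}(F)=0$.

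Having shown $F$ is $\phi_i$-regular with $ind_{\phi_i}(F)=0$ for every $i$, I would conclude that $F$ is $p$-regular, so the equality in Theorem \ref{ore}(1) applies and yields $\nu_p((\Z_K:\Z[\al]))=\sum_{i=1}^r ind_{\phi_i}(F)=0$. I expect the only delicate point to be the index computation, namely the elementary but definition-sensitive observation that a length-one segment, or any side of height $1$, passes on or below no lattice point with both coordinates at least $1$; this is exactly what makes the two hypotheses suffice and is essentially the content of this corollary.
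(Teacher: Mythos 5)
Your proof is correct, and it is essentially the canonical argument: the paper states Corollary \ref{cor1} without any proof, treating it as an immediate consequence of Theorem \ref{ore} and the definitions. The two observations you isolate---each relevant side has degree $\gcd(l(S),H(S))=1$, so every residual polynomial is linear and hence square free (making $F$ $p$-regular), while no lattice point with both coordinates at least $1$ lies on or below a length-one polygon ending at $(1,0)$ or a single side of height $1$ ending at $(l_i,0)$ (making $\sum_{i=1}^r ind_{\phi_i}(F)=0$)---are exactly the details the paper leaves to the reader, and invoking the equality case of Theorem \ref{ore}(1) then yields $\nu_p((\mathbb{Z}_K:\mathbb{Z}[\alpha]))=0$ precisely as you conclude.
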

  In \cite{GMN}, Gu\`ardia, Montes, and  Nart introduced  the notion of admissible $\phi$-expansion used in order to treat some special cases when the $\phi$-expansion is hard to calculate. Let
\begin{equation}
\label{eq1}
F(x)=\sum_{i=0}^nA_i'(x)\phi(x)^i,\quad A_i'(x)\in \mathbb{Z}[x],
\end{equation}
be a $\phi$-expansion of $F(x)$, not necessarily the $\phi$-expansion (deg$(A_i')$ is not necessarily less than deg$(\ph)$). Take $u_i'=\nu_p(A_i'(x))$, for all $i=0,\dots, n$, and let $N'$ be the lower boundary of the convex envelope of the set of points $\{(i,u_i')\,\mid\,0\leq i\leq n,\,u_i'\neq\infty\}$ and $N'^+$ its principal part. To any $i=0,\dots,n$, we attach the residue coefficient as follows:
$$
c_i'=\left\{
\begin{array}{ll}
0,&\text{if }(i,u_i')\text{ lies above }N',\\
\left(\frac{A_i'(x)}{p^{u_i'}}\right)(\mod(p,\phi(x))),& \text{if }(i,u_i')\text{ lies on }N'.
\end{array}
\right.
$$
Likewise, for any side $S$ of $N'^+$, we can define the residual polynomial attached to $S$ and denoted $R_{\lambda}'(F)(y)\,($similar to the residual polynomial $R_{\lambda}(F)(y)$ from the $\phi$-expansion$)$. We say that the $\phi$-expansion $(\ref{eq1})$ is admissible if $c_i'\neq 0$ for each abscissa $i$ of a vertex of $N'$. For more details, we refer to \cite{GMN}.
\begin{lemma}{$($\cite[Lemma 1.12]{GMN}$)$}\\
	If a $\phi$-expansion of $F(x)$ is admissible, then $N'^+=\npp{F}$ and $c_i'=c_i$. In particular, for any side $S$ of $N'^+$ we have $R_{\lambda}'(F)(y)=R_{\lambda}(F)(y)$ up to {multiplication} by a nonzero coefficient of $\fph$.
\end{lemma}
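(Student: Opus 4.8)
The plan is to compare the given, possibly non-reduced, $\phi$-expansion $F(x)=\sum_{i=0}^n A_i'(x)\phi(x)^i$ with the genuine $\phi$-expansion $F(x)=\sum_{i=0}^n a_i(x)\phi(x)^i$ (where $\deg a_i<\deg\phi$) by passing from the former to the latter through the division algorithm, and to show that this passage changes neither the principal polygon nor the residue coefficients attached to its vertices. The mechanism is Euclidean division by the monic polynomial $\phi$: writing $A_i'=q_i\phi+r_i$ with $\deg r_i<\deg\phi$, the term $r_i\phi^i$ stays in $\phi$-degree $i$ while $q_i\phi^{i+1}$ is pushed up to $\phi$-degree $i+1$; iterating finitely often (the degrees strictly drop) produces the canonical coefficients $a_i$. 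The single analytic input I would isolate first is a monotonicity statement: if $A\in\Z[x]$ and $A=q\phi+r$ with $\phi$ monic, then $\nu_p(q)\ge\nu_p(A)$ and $\nu_p(r)\ge\nu_p(A)$, where $\nu_p$ of a polynomial means the minimum $p$-adic valuation of its coefficients. This follows at once by factoring out $p^{\nu_p(A)}$ and using that division by a monic polynomial preserves integer coefficients.

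First I would use this monotonicity to prove one inequality, namely that $N'$ lies on or below $\nph{F}$. Tracking the reduction, every contribution to a canonical coefficient $a_i$ originates from some $A_j'$ with $j\le i$ (quotients only travel upward) and, by repeated application of the monotonicity statement, reaches $\phi$-degree $i$ with $p$-adic valuation $\ge u_j'=\nu_p(A_j')$. Consequently each genuine point $(i,\nu_p(a_i))$ lies weakly above the convex envelope $N'$ of the points $(j,u_j')$; since the region above a convex polygon is convex, the lower convex hull $\nph{F}$ of the genuine points also lies on or above $N'$. Equivalently, for every $\lambda>0$ a supporting-line argument gives $\min_i(u_i'+i\lambda)\le v_\lambda(F)$, where $v_\lambda(F)=\min_i(\nu_p(a_i)+i\lambda)$ is the intrinsic $\lambda$-value read off $\nph{F}$.

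The heart of the matter, and the step I expect to be the main obstacle, is the reverse inclusion at the vertices, where admissibility enters. Fix a vertex $(i,u_i')$ of the principal part of $N'$. Because the principal part has strictly negative slopes, every earlier abscissa $j<i$ lying on it satisfies $u_j'\ge N'(j)>N'(i)=u_i'$, so every contribution arriving at $\phi$-degree $i$ from a lower $\phi$-degree has $p$-adic valuation strictly larger than $u_i'$; meanwhile the ``diagonal'' contribution $r_i=A_i'\bmod\phi$ has valuation exactly $u_i'$ because admissibility forces $c_i'=(A_i'/p^{u_i'})\bmod(p,\phi)\ne0$, i.e. $r_i/p^{u_i'}\not\equiv0\pmod{(p,\phi)}$. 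Hence the $p^{u_i'}$-term of $a_i$ equals that of $r_i$, which yields both $\nu_p(a_i)=u_i'$ and $a_i/p^{u_i'}\equiv c_i'\pmod{(p,\phi)}$; that is, $(i,u_i')$ is a genuine vertex with $c_i=c_i'$. The delicate point to handle carefully here is possible interference from $\phi$-degrees to the left of the principal part, where the polygon is no longer decreasing: one must verify that no low-valuation quotient can land on the vertex at valuation $\le u_i'$, and admissibility at each vertex is exactly what rules this out.

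Finally, having shown that every vertex of $N'^+$ is a vertex of $\nph{F}$ of the same height and carrying the same residue coefficient, and combining this with the inequality $N'\le\nph{F}$ of the second step, the two convex principal polygons must coincide, so $N'^+=\npp{F}$. Along any side the residue coefficients agree at all lattice points: at a lattice point the same domination argument gives either $\nu_p(a_i)=u_i'$ with $a_i/p^{u_i'}\equiv c_i'$ (when $c_i'\ne0$) or $\nu_p(a_i)>u_i'$ with $c_i=0=c_i'$ (when $c_i'=0$), so no extra admissibility is needed off the vertices. Therefore the residual polynomials $R_\lambda'(F)(y)$ and $R_\lambda(F)(y)$ have identical coefficients up to the common nonzero factor arising from the choice of base vertex and uniformizer, i.e. they agree up to multiplication by a nonzero element of $\fph$, which is exactly the assertion of the lemma.
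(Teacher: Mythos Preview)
The paper does not supply a proof of this lemma; it is quoted verbatim from \cite[Lemma~1.12]{GMN} and left unproved. There is therefore nothing in the present paper to compare your argument against.

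For what it is worth, your proposal is correct and is essentially the argument given in \cite{GMN}: pass from the admissible expansion to the canonical one by iterated Euclidean division by the monic polynomial $\phi$, use the Gauss--type monotonicity $\nu_p(q),\nu_p(r)\ge\nu_p(A)$ (for $A=q\phi+r$) to place every genuine point $(i,\nu_p(a_i))$ weakly above $N'$, and then invoke admissibility at each vertex $(i,u_i')$ of $N'^+$ to force $\nu_p(a_i)=u_i'$ and $c_i=c_i'$ there. One small comment: your flagged ``delicate point'' about interference from abscissas to the left of the principal part is a non--issue, because $N'^+$ begins at abscissa $0$; for any vertex $(i,u_i')$ of $N'^+$ and any $j<i$ one has $u_j'\ge N'(j)>N'(i)=u_i'$ by the strict negativity of the slopes, so every carried quotient lands with valuation $\ge u_i'+1$ as you need. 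With that clarification your outline closes cleanly.
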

\section{Proofs of main results}  
The following lemma is useful for proving Lemma \ref{NP}.
\begin{lemma} \label{binomial} 
	Let $p$ be a rational prime integer and $r$  a positive integer. Then\begin{eqnarray*}
	\nu_p\left(\binom{p^r}{j}\right)  =  r - \nu_p(j) 
	\end{eqnarray*}for any integer $j= 1,\dots,p^r-1 $. 
\end{lemma}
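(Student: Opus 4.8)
The plan is to reduce the valuation computation to a single non-divisibility statement by means of the elementary absorption identity
\[
j\binom{p^r}{j}=p^r\binom{p^r-1}{j-1},
\]
valid for every $j=1,\dots,p^r-1$; it is immediate from the factorial expression, since $j\binom{p^r}{j}=\frac{(p^r)!}{(j-1)!\,(p^r-j)!}=p^r\binom{p^r-1}{j-1}$. Applying $\nu_p$ to both sides and using that the $p$-adic valuation is additive on products, I would obtain
\[
\nu_p(j)+\nu_p\!\left(\binom{p^r}{j}\right)=r+\nu_p\!\left(\binom{p^r-1}{j-1}\right).
\]
Thus the asserted formula $\nu_p\!\left(\binom{p^r}{j}\right)=r-\nu_p(j)$ is \emph{equivalent} to the claim that $\binom{p^r-1}{j-1}$ is a $p$-adic unit for every $j$ in the stated range, i.e. that $p\nmid\binom{p^r-1}{k}$ for all $0\le k\le p^r-2$.

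The second step is to establish this non-divisibility, which I would derive from Lucas' theorem. Writing $k=\sum_{i=0}^{r-1}k_ip^i$ in base $p$ with $0\le k_i\le p-1$, and observing that all base-$p$ digits of $p^r-1$ equal $p-1$, Lucas' congruence gives
\[
\binom{p^r-1}{k}\equiv\prod_{i=0}^{r-1}\binom{p-1}{k_i}\pmod p.
\]
Since $\binom{p-1}{k_i}=\frac{(p-1)(p-2)\cdots(p-k_i)}{k_i!}\equiv(-1)^{k_i}\pmod p$, each factor is a nonzero residue, so the product equals $\pm 1$ modulo $p$; hence $p\nmid\binom{p^r-1}{k}$. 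Substituting back into the displayed equality completes the argument, and one checks that the range $j=1,\dots,p^r-1$ forces $0\le\nu_p(j)\le r-1$, so that $r-\nu_p(j)$ is always a positive integer, consistent with the well-known fact that $p$ divides every interior coefficient $\binom{p^r}{j}$.

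A fully self-contained alternative, avoiding Lucas, would instead invoke Legendre's formula $\nu_p(N!)=\frac{N-s_p(N)}{p-1}$, where $s_p(N)$ is the base-$p$ digit sum, and evaluate $\nu_p\binom{p^r}{j}=\frac{s_p(j)+s_p(p^r-j)-1}{p-1}$ directly; equivalently, by Kummer's theorem this valuation counts the carries in the base-$p$ addition $j+(p^r-j)=p^r$, and a short digit analysis shows there are exactly $r-\nu_p(j)$ such carries, one at each position from $\nu_p(j)$ up to $r-1$. I do not expect any genuine obstacle: the only substantive input is the residue computation $\binom{p-1}{k_i}\equiv(-1)^{k_i}$ (or the equivalent carry count), and everything else is routine bookkeeping. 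The mild point to handle with care is simply that the absorption identity and valuation additivity are applied only in the range $1\le j\le p^r-1$, where no coefficient vanishes.
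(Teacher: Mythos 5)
Your proof is correct, and there is in fact nothing in the paper to compare it against: the paper states Lemma \ref{binomial} without proof, treating it as a known elementary fact (it is the standard formula for the $p$-adic valuation of the interior binomial coefficients $\binom{p^r}{j}$), and proceeds directly to use it in the proof of Lemma \ref{NP}. Your two-step argument is sound and complete. The absorption identity $j\binom{p^r}{j}=p^r\binom{p^r-1}{j-1}$ together with additivity of $\nu_p$ correctly reduces the claim to the non-divisibility $p\nmid\binom{p^r-1}{k}$ for $0\le k\le p^r-2$, and the Lucas computation settles that: all $r$ base-$p$ digits of $p^r-1$ equal $p-1$, and $\binom{p-1}{k_i}=\prod_{t=1}^{k_i}\frac{p-t}{t}\equiv(-1)^{k_i}\pmod p$, so each Lucas factor is a unit mod $p$. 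Your boundary bookkeeping is also right: $1\le j\le p^r-1$ forces $0\le\nu_p(j)\le r-1$, so no degenerate case arises and $r-\nu_p(j)\ge 1$, consistent with $p$ dividing every interior coefficient. The sketched alternative via Kummer's theorem is equally valid: with $v=\nu_p(j)$, the digits of $j$ and $p^r-j$ in positions $0,\dots,v-1$ are all zero, at position $v$ they sum to $p$, and at each position $v+1,\dots,r-1$ they sum to $p-1$ plus the incoming carry, so the addition $j+(p^r-j)=p^r$ carries at exactly the $r-v$ positions $v,v+1,\dots,r-1$. Either route would serve as a complete proof of the statement the paper leaves unproved.
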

The following lemma plays a key role in the proof of Theorem \ref{dn1}; it allows to determine the $\ph$-Newton polygon of $F(x)$. It extends \cite[Lemma 4.1]{EC}, which allows to determine  $\ph$-Newton polygons for polynomials of type $x^n-m$.
\begin{lemma}\label{NP}
Let $F(x)=x^n+ax^m+b \in \Z[x]$ be  polynomial and $p$ an odd  rational prime integer which divides both $n$ and $a$, and  does not divide $b$. Let $n=p^rs$  with $p$ does not divide $s$. Set  $\mu=\nu_p(a)$,  $\nu=\nu_p(b^{p-1}-1)$, and $g=\mbox{min}(\mu,\nu)$. Then $\ol{F(x)}=\ol{(x^s+b)}^{p^r}$. Let  $\ph\in \Z[x]$ be a monic polynomial, whose reduction modulo $p$ divides $\ol{F(x)}$.  
 \begin{enumerate}
 \item
 If $g\le r$, then $\npp{F}$ has at least $g$ sides joining  the  points $\{(0,v)\}\cup \{(p^j,r-j), \, j=0,\dots,g-1\}$  for some integer $V\ge g$.
 \item
 If  $g\ge r+1$, then $\npp{F}$ has $r+1$ sides joining  the  points $\{(0,V)\}\cup \{(p^j,r-j), \, j=0,\dots,r-1\}$ for some integer $V\ge r+1$.
 \end{enumerate}
\end{lemma}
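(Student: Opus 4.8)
The plan is to compute the principal $\ph$-Newton polygon $\npp{F}$ explicitly and then read off its sides; the two regimes $g\le r$ and $g\ge r+1$ will follow from elementary convexity once the relevant points are located. First I would settle the reduction modulo $p$. Since $p\mid a$ we have $\ol{F(x)}=x^n+\ol b=(x^s)^{p^r}+\ol b$ in $\F_p[x]$, and because $\ol b^{\,p^r}=\ol b$ by Fermat together with the Frobenius identity in characteristic $p$, we get $(x^s+\ol b)^{p^r}=x^{sp^r}+\ol b^{\,p^r}=x^n+\ol b$; hence $\ol{F(x)}=\ol{(x^s+b)}^{p^r}$. As $p\nmid s$ and $p\nmid b$, the polynomial $x^s+b$ is separable modulo $p$, so every monic $\ph$ whose reduction is an irreducible factor of $\ol F$ satisfies $\ol\ph\mid\ol{(x^s+b)}$, and the multiplicity of $\ol\ph$ in $\ol F$ equals $p^r$; this is exactly the horizontal length that $\npp{F}$ must have.

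Next I would assemble the two arithmetic inputs and produce an admissible expansion. Lemma \ref{binomial} gives $\nu_p\binom{p^r}{j}=r-\nu_p(j)$, so $\binom{p^r}{p^j}$ has valuation exactly $r-j$ for $0\le j\le r$; and for the constant term I would show $\nu_p(b^{p^r-1}-1)=\nu$ by writing $p^r-1=(p-1)k$ with $k=1+p+\cdots+p^{r-1}\equiv 1\pmod p$, so that $b^{(p-1)k}=(1+p^{\nu}u)^{k}$ with $p\nmid u$ and $p\nmid k$, whence the leading term $kp^{\nu}u$ controls the valuation. Using $\psi=x^s+b$ and $x^n=(\psi-b)^{p^r}$, the binomial theorem yields for the part $x^n+b$ the coefficient $\binom{p^r}{j}(-b)^{p^r-j}$ of $\psi^j$, of valuation $r-\nu_p(j)$, together with the constant term $-(b^{p^r}-b)$ of valuation $\nu$, while $ax^m$ expands into powers of $\psi$ with all coefficients divisible by $a$, hence of valuation $\ge\mu$. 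Thus the abscissae that are powers of $p$ furnish the staircase vertices $(p^j,r-j)$, the left-hand vertex is $(0,V)$ with $V\ge\min(\nu,\mu)=g$, and every contribution of $ax^m$ sits at height $\ge\mu\ge g$. I would then invoke the admissibility criterion quoted above (\cite[Lemma 1.12]{GMN}) to guarantee that this expansion computes the genuine $\npp{F}$.

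For the convexity step, note that each side produced has either height $1$ or horizontal length $1$, so its degree is $1$; consequently the residual polynomials are linear, $F$ is automatically $p$-regular, and Ore's Theorem \ref{ore} applies. It remains to locate the left part of the polygon by comparing $V$ with the staircase. If $g\ge r+1$, then $V\ge r+1>r$, so the segment from $(0,V)$ to $(1,r)$ has slope $\le-1$ and the entire staircase $(1,r),(p,r-1),\dots,(p^r,0)$ is convexly visible, giving the $r+1$ sides of case (2). If $g\le r$, then $(0,V)$ lies at height $\le r$, so the first side joins it to the first staircase vertex restoring convexity, after which the remaining staircase descends to $(p^r,0)$, producing the $g$ sides of case (1).

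I expect the main obstacle to be the interaction of the term $ax^m$ with the staircase. One must check that the valuations it injects (all $\ge\mu\ge g$) never drop below the binomial staircase in the relevant range, so that they neither lower the vertices $(p^j,r-j)$ nor create spurious new vertices, and simultaneously that the chosen expansion remains admissible in the genuinely harder situation where $x^s+b$ is reducible modulo $p$ and $\deg\ph<s$, so that the $\psi$-expansion faithfully reflects the $\ph$-polygon. This is precisely the step where the definition $g=\min(\mu,\nu)$ is used and where the dichotomy between the two cases is forced.
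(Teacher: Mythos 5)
Your route is in substance the paper's own: reduce $F$ modulo $p$ to $\ol{(x^s+b)}^{p^r}$, expand $x^n=((x^s+b)-b)^{p^r}$ binomially, control the staircase valuations by Lemma \ref{binomial}, note that $\nu_p(b^{p^r-1}-1)=\nu$ via $p^r-1=(p-1)k$ with $k\equiv 1\pmod p$, bound the contribution of $ax^m$ by $\mu$, and finish by convexity according to whether $g=\min(\mu,\nu)\le r$ or $g\ge r+1$. Your convexity discussion is sound, and in fact describes the polygon more accurately than the lemma's displayed index set (in case (1) the guaranteed vertices are the right-end staircase points of heights $0,1,\dots,g-1$ at abscissas $p^r,p^{r-1},\dots,p^{r-g+1}$, as in the paper's Figure 2; the index set printed in the statement is a typo).

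There is, however, one genuine gap, and you have named it yourself without closing it: everything you compute is an expansion of $F$ in powers of $\psi=x^s+b$, whereas the statement concerns $\npp{F}$ for a monic $\ph$ whose reduction is an irreducible factor of $\ol{x^s+b}$, possibly with $\deg\ph<s$. An expansion in powers of $\psi$ is not a $\ph$-expansion of any kind, so the admissibility criterion \cite[Lemma 1.12]{GMN} that you invoke does not apply to it: admissibility is defined only for expansions $F=\sum_i A_i'(x)\ph(x)^i$ in powers of $\ph$ itself. This transfer is precisely the non-routine step, and the paper carries it out by writing $x^s+b=\ph(x)Q(x)+pR(x)$, substituting into $F=((x^s+b)-b)^{p^r}+ax^m+b$, and expanding to obtain $F(x)=\sum_{j=1}^{p^r}\binom{p^r}{j}(pR(x)-b)^{p^r-j}Q(x)^j\ph(x)^j+p^{r+1}H(x)-b^{p^r}+ax^m+b$, after which the tail $p^{r+1}H+ax^m-b^{p^r}+b$ is given its $\ph$-expansion with coefficients $r_j$ of valuation at least $\min(r+1,g)$, yielding an admissible $\ph$-expansion with coefficients $A_j=\binom{p^r}{j}(pR-b)^{p^r-j}Q^j+r_j$. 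The decisive fact your sketch omits is that $x^s+b$ is separable modulo $p$ (as $p\nmid sb$), hence $\ol{\ph}\nmid\ol{Q}$, so the reduction $(-\ol{b})^{p^r-j}\,\ol{Q}^j$ is nonzero and $\nu_p(A_j)=\nu_p\bigl(\binom{p^r}{j}\bigr)=r-\nu_p(j)$ at every abscissa where $r-\nu_p(j)<\min(r+1,g)$; this is exactly what pins the staircase vertices of the $\ph$-polygon and legitimizes the admissibility argument. With that substitution step inserted, your proof becomes the paper's. One minor overreach: your parenthetical claim that every side has degree $1$ (so that $F$ is automatically $p$-regular) is not part of this lemma and can fail for the leftmost side when $V>g$; regularity enters only later, in the proof of Lemma \ref{dn1}.
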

  \begin{proof}
Since $\ol{F(x)}=(x^s+b)^{p^r}$ in $\F_p[x]$, we conclude that $\ol{\ph}$ divides $\ol{x^s+b}$. Let $x^s+b=\ph(x)Q(x)+pR(x)$ for some $R\in \Z[x]$. Then $F(x)=(x^s+b-b)^{p^r}+ax^m+b=(\ph(x)Q(x)+pR(x)-b)^{p^r}+ax^m+b=\displaystyle\sum_{j=1}^{p^r}\binom{p^r}{j}(pR(x)-b)^{{p^r}-j}Q^j\phi^j(x)+(pR(x)-b)^{p^r}+ax^m+b=\displaystyle\sum_{j=1}^{p^r}\binom{p^r}{j}(pR(x)-b)^{{p^r}-j}Q^j\phi^j(x)+p^{r+1}H(x)-b^{p^r}+ax^m+b$ for some $H\in \Z[x]$.
Let $p^{r+1}H(x)+ax^m-b^{p^r}+b=\displaystyle\sum_{k\ge 0}r_j\phi^j(x)$ be the $\ph$-expansion.  Then
$F(x)=\dots+\displaystyle\sum_{j=1}^{p^r}A_j(x)\phi^j(x)+r_0$ is the $\ph$-expansion of $F(x)$ with $A_j(x)=\binom{p^r}{j}((pR(x)-b)^{{p^r}-j}Q^j+r_j)$ for every $j=1,\dots p^r$.  Since $x^s+b$ is separable over $\F_p$, $\ol{\ph}$ does not divide $\ol{Q(x)}$. Hence $\nu_p(A_j)=\nu_p(\binom{p^r}{j}((pR(x)-b)^{{p^r}-j}Q^j+r_j))=\nu_p(\binom{p^r}{j})$ for every $j=1,\dots,p^r-1$.
Also since for every
$j=0,\dots,p^r$, $\nu_p(r_j)\ge \mbox{min}(r+1,g)$, we conclude that:
 \begin{figure}[htbp] 
	\centering
	\begin{tikzpicture}[x=0.5cm,y=0.6cm]
	\draw[latex-latex] (0,6.5) -- (0,0) -- (29,0) ;
	\draw[thick] (0,0) -- (-0.5,0);
	\draw[thick] (0,0) -- (0,-0.5); 
%	\draw[thick,red] (1,-2pt) -- (1,2pt);
	\draw[thick,red] (3,-2pt) -- (3,2pt);
	\draw[thick,red] (6,-2pt) -- (6,2pt);
	\draw[thick,red] (11,-2pt) -- (11,2pt);
	\draw[thick,red] (18,-2pt) -- (18,2pt);
	\draw[thick,red] (28,-2pt) -- (28,2pt);
	\draw[thick,red] (-2pt,1) -- (2pt,1);
	\draw[thick,red] (-2pt,2) -- (2pt,2);
	\draw[thick,red] (-2pt,3) -- (2pt,3);
	%\draw[thick,red] (-2pt,4) -- (2pt,4);
	\draw[thick,red] (-2pt,5) -- (2pt,5);
	\draw[thick,red] (-2pt,5.5) -- (2pt,5.5);
		\draw[thick,red] (-2pt,6) -- (2pt,6);
	\node at (0,0) [below left,blue]{\footnotesize  $0$};
	%\node at (1,0) [below ,blue]{\footnotesize  $1$};
	\node at (3,0) [below ,blue]{\footnotesize $p^{r-x+1}$};
	\node at (6,0) [below ,blue]{\footnotesize  $p^{r-x+2}$};
	\node at (11,0) [below ,blue]{\footnotesize  $p^{r-2}$};
	\node at (18,0) [below ,blue]{\footnotesize  $p^{r-1}$};
	\node at (28,0) [below ,blue]{\footnotesize  $p^{r}$};
	\node at (0,1) [left ,blue]{\footnotesize  $1$};
	\node at (0,2) [left ,blue]{\footnotesize  $2$};
	%	\node at (0,3) [left ,blue]{\footnotesize  $3$};
	%	\node at (0,4) [left ,blue]{\footnotesize  $r=4$};
	\node at (0,5) [left ,blue]{\footnotesize  $v$};
%\node at (0,6) [left ,blue]{\footnotesize  $\n$};
	%	\node at (0,5.5) [left ,blue]{\footnotesize  $	r+1$};
	%	\node at (0,6) [left ,blue]{\footnotesize  $\delta \ge 6$};
	\draw[thick,mark=*] plot coordinates{(0,5) (3,3.7)};
	\draw[thick,mark=*] plot coordinates{(3,3.7) (6,3)};
	%\draw[thick,mark=*] plot coordinates{(6,3) (10,2)};
	\draw[thick,mark=*] plot coordinates{(11,2) (18,1)};
	\draw[thick,mark=*] plot coordinates{(28,0) (18,1)};
%	\draw[thick, dashed] plot coordinates{(1.4,4.8) (2.6,4.1) };
	\draw[thick, dashed] plot coordinates{(6.5,2.9) (10.5,2.1) };
	\node at (0.5,4.5) [above right  ,blue]{\footnotesize  $S_{1}$};
	\node at (4.5,3.2) [above right  ,blue]{\footnotesize  $S_{2}$};
	%\node at (14,1.4) [above right  ,blue]{\footnotesize  $S_{}$};
	%\node at (23,0.3) [above right  ,blue]{\footnotesize  $S_{\mu}$};
	%\node at (50,0.3) [above right  ,blue]{\footnotesize  $S_{5}$};
	\end{tikzpicture}
	\caption{\large  $\npp{F}$ with respect to $p$ when  $V	\ge \mbox{min}(g, r+1)$.\hspace{5cm}}
\end{figure}

\begin{enumerate}
\item
If $g\le r$, then   for every $j=1,\dots,p^{g}$, $\nu_p(A_j)=\nu_p(\binom{p^r}{j})=r-\nu_p(j)$, and so $\npp{F}$ is has at least $g$ sides joining  the  points $\{(0,V)\}\cup \{(p^j,r-j), \, j=0,\dots,g-1\}$ for some $V\ge g$.
 \item
 If  $g\ge r+1$, then for every $j=1,\dots,p^{r}$, $\nu_p(A_j)=\nu_p(\binom{p^r}{j})=r-\nu_p(j)$, and so  $\npp{F}$ has $r+1$ sides joining  the  points $\{(0,V)\}\cup \{(p^j,r-j), \, j=0,\dots,r-1\}$ for some integer $V\ge r+1$.
\end{enumerate}
 \end{proof}

The following Lemma characterizes the prime divisors of $K$:
\begin{lemma} \label{comindex}
	Let  $p$ be a  rational prime integer and $K$  a number field. For every positive integer $f$, let $P_p(f)$  be the number of distinct prime ideals of $\Z_K$ lying above $p$ with residue degree $f$ and $N_p(f)$ the number of monic irreducible polynomial of degree $f$ in $\F_p[x]$. If $ P_p(f) > N_p(f)$ for some positive integer $f$, then $p$ is a prime common index divisor of $K$.
\end{lemma}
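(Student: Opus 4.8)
The plan is to argue by contraposition, using Dedekind's theorem as recalled in Section 3. Suppose $p$ is \emph{not} a prime common index divisor of $K$, that is, $p \nmid i(K)$. By definition $i(K)=\gcd\left((\Z_K:\Z[\th])\mid K=\Q(\th),\ \th\in\Z_K\right)$, and a prime fails to divide a greatest common divisor precisely when it fails to divide at least one member of the underlying set. Hence there exists a primitive integral element $\th\in\Z_K$ with $K=\Q(\th)$ such that $p\nmid(\Z_K:\Z[\th])$.

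For such a $\th$, Dedekind's theorem applies directly. Writing the factorization of the reduced minimal polynomial as $\ol{m_\th(x)}=\prod_{i=1}^{g}\ol{\phi_i(x)}^{e_i}$ into powers of pairwise coprime monic irreducibles over $\F_p$, one obtains
$$p\Z_K=\prod_{i=1}^{g}\p_i^{e_i},\qquad f(\p_i)=\deg(\phi_i),$$
where the $\p_i$ are the distinct prime ideals of $\Z_K$ lying above $p$. The essential observation is that the assignment $\p_i\mapsto\phi_i$ sends the prime ideals above $p$ of a fixed residue degree $f$ injectively into the monic irreducible polynomials of degree $f$ in $\F_p[x]$: distinct prime ideals correspond to distinct (pairwise coprime) irreducible factors, and the residue degree of $\p_i$ is exactly $\deg(\phi_i)$.

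Counting on both sides gives, for every positive integer $f$, the inequality $P_p(f)\le N_p(f)$, since the number of distinct degree-$f$ irreducible factors of $\ol{m_\th}$ cannot exceed the total number $N_p(f)$ of monic irreducibles of that degree. Taking the contrapositive yields the claim: if $P_p(f)>N_p(f)$ for some $f$, then no primitive generator $\th$ can have index coprime to $p$, so $p$ divides $(\Z_K:\Z[\th])$ for \emph{every} such $\th$; therefore $p\mid i(K)$ and $p$ is a prime common index divisor of $K$.

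The only genuinely delicate step is the initial reduction, namely extracting from $p\nmid i(K)$ a single generator whose index is a $p$-adic unit; this rests solely on the elementary property of the greatest common divisor noted above. Once such a generator is fixed, the remainder is a purely formal application of Dedekind's theorem together with the fact that residue degrees are read off as the degrees of the irreducible factors, so that $P_p(f)\le N_p(f)$ becomes a straightforward counting statement with no further arithmetic input required.
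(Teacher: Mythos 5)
Your proof is correct, and it is exactly the standard argument: the paper states Lemma \ref{comindex} without proof (it is a classical criterion, used e.g.\ in \cite{C, En}), and your route---extracting from $p\nmid i(K)$ a generator $\th$ with $p\nmid(\Z_K:\Z[\th])$, then applying Dedekind's theorem so that the distinct primes above $p$ of residue degree $f$ inject into the distinct monic irreducible degree-$f$ factors of $\ol{m_\th(x)}$ in $\F_p[x]$, giving $P_p(f)\le N_p(f)$---is precisely the reasoning the paper implicitly relies on. Both delicate points are handled correctly: the gcd reduction (a prime dividing every index would divide $i(K)$) and the injectivity of $\p_i\mapsto\phi_i$ with $f(\p_i)=\deg(\phi_i)$, and you rightly claim only the stated direction, since the converse of this counting criterion fails in general.
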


	\begin{rem}\label{remore}
In order to prove  our theorems, we don't need to determine the factorization of $p\Z_K$ explicitly. But according to Lemma \ref{comindex}, we need only to show that $P_p(f)>{N}_p(f)$ for an adequate positive integer  $f$. So, in practice, the second \ point of Theorem \ref{ore}, could  be  replaced by the following:
If  $l_i=1$ or $d_{ij}=1$ or $a_{ijs}=1$ for some $(i,j,s)$ according to the notations of   Theorem \ref{ore}, then $\psi_{ijs}$ provides  a prime ideal $\p_{ijs}$ of $\Z_K$ lying above $p$ with residue degree  $f_{ijs}=$deg$(\phi_i)\times t_{ijs}$, where  $t_{ijs}=$deg$(\psi_{ijs})$ and $p\Z_K=\p_{ijs}^{e_ij}I$, where the factorization of the ideal $I$ can be derived from the other factors of each residual polynomials of $F(x)$.
	\end{rem}
	
\begin{proof}[Proof of Lemma \ref{dn1}]
 In all cases, we prove that $K$ is not monogenic by showing that $p$ divides $i(K)$. For this reason, in view of Lemma \ref{comindex}, it is sufficient to show that the prime ideal factorization of $p\Z_K$ satisfies the inequality $ P_p(f) > N_p(f)$ for an adequate positive integer $f$. Let $\ph$ be a monic polynomial with $\ol{\ph}$ divides $\ol{F(x)}$ in $\F_p[x]$. Let $g=\mbox{min}(\mu,\nu)$.
\begin{enumerate}
\item 
 If $g\le r$, then by Lemma \ref{NP},  	$\npp{(F)} $ has $g$ sides of degree $1$ each. 	Thus each side of $\npp{(F)} $ provides a unique prime ideal of $\Z_K$ lying above $p$ with residue degree $1$ each. Hence according to Remark \ref{remore}, the irreducible factor $\ol{\ph}$ of $\ol{F(x)}$ provides  at least $g$ prime ideals of $\Z_K$ lying above $p$ with residue degree $1$ each.
In total there  are at least $gN_p(f,s,b)$ prime ideals of $\Z_K$ lying above $p$ with residue degree $1$ each. It follows that if 
$N_p(f)<g N_p(f,s,b)$, then $p $ divides $ i(K)$, and consequently $K$ is not monogenic.
\item
If  $ r+1  \le g$, then by Lemma \ref{NP},  	$\npp{(F)} $ has $r+1$ sides of degree $1$ each. 	Thus  each side of $\npp{(F)} $ provides a unique prime ideal of $\Z_K$ lying above $p$ with residue degree $1$ each. Hence  according to Remark \ref{remore}, the irreducible factor $\ol{\ph}$ of $\ol{F(x)}$ provides   $r+1$ prime ideals of $\Z_K$ lying above $p$ with residue degree $1$ each.
In total there  are at least $(r+1)N_p(f,s,b)$ prime ideals of $\Z_K$ lying above $p$ with residue degree $1$ each. It follows that if 
$N_p(f)<(r+1) N_p(f,s,b)$, then $p $ divides $ i(K)$, and consequently $K$ is not monogenic.
\end{enumerate}
\end{proof}
\begin{proof}[Proof of Theorem \ref{cordn1}]
\begin{enumerate}
\item 
If $b\equiv -1\md9$, then $\ol{F(x)}=(x^{2^k}-1)^{3^r}(x-1)^{3^r}(x+1)^{3^r}H(x)^{3^r}$ in $\F_3[x]$. According to the  notations of Theorem \ref{dn1}, we have $s=2^k$, $N_3(1,s,b)\ge 2$ and $N_3(1)=3$. Since $r+1$, $\mu,$ and $\nu$ are all greater or equal $2$, we have  $N_3(1)=3<4\le 2N_3(2,4,1)$.  By Theorem \ref{dn1}, we  conclude that
$3$ divides  $i(K)$.
\item
The first point covers the case when $b\equiv -1\md9$. For $b\equiv 1\md9$, we have $x^s+1=x^4+1=(x^2-x-1)(x^2+x-1)$  in $\F_3[x]$. Since the unique irreducible polynomial of degree $2$ in $\F_3[x]$ are $x^2+1$, $x^2-x-1$, and $x^2+x-1$, we conclude that $N_3(2)=3$. Thus if $k=2$, then $N_3(2)=3<4\le 2N_3(2,4,-1)$. Thus by Theorem \ref{dn1},  $3$ divides  $i(K)$.
\end{enumerate}
\end{proof}
\begin{proof}[Proof of Theorem \ref{d6}]
The first point is a particular case of Theorem \ref{cordn1}. For the second one,
 since $a\equiv 0\md 8$ and $b\equiv -1 \md 4$,  $\ol{F(x)}= (\ol{\ph_1(x)\ph_2(x)})^2$ in $\F_2[x]$, where  $\ph_1(x)=x-1$ and $\ph_2=x^2+x+1$. 
Let		$F(x)= \ph_2(x)^3-3x\ph_2(x)^2+(-2+2x)\ph_2+ax^m+1+b$ and $ax^m=\sum_{i=0}^2r_i(x)\ph_2(x)^i$. Then $F(x)= \ph_2(x)^3+(-3x+r_2(x))\ph_2(x)^2+(-2+2x+r_1(x))\ph_2+1+b+r_0(x)$.
Since $\nu_2(ax^m)\ge 3$, we conclude that if   $\nu_2(b+1)\ge 3$, then  $N_{\ph_2}^+(F)=S_1+S_2$ has two sides joining $(0,v)$, $(1,1)$, and $(2,0)$ with   $v=\nu_2(b+1)\ge 3$. So each side $S_i$ is of degree $1$, and  so $\ph_2$ provides  two prime ideals of $\Z_K$ lying above $2$ with residue degree $2$ each. Hence $2$ divides $i(K)$. If   $\nu_2(b+1)= 2$, then  $N_{\ph_2}^+(F)=S$ has a single  side joining $(0,2)$, $(1,1)$, and $(2,0)$ with $R_{\lambda}(F)(y)=xy^2+(x+1)y+1=x(y-1)(y-x-1)$ its attached residual polynomial of $F(x)$.  Hence $\ph_2$ provides  two prime ideals of $\Z_K$ lying above $2$ with residue degree $2$ each. Therefore $2$ divides $i(K)$. 
\end{proof}
\section{Examples}
Let $F(x) \in \Z[x]$ be a	 monic irreducible polynomial  and $K$ a number field generated by a  root $\al$ of $F(x)$.

\begin{enumerate}
\item  
For $F(x)=x^{6}+72 x^m+35$,  since $72\equiv 0\md9$ and $35\equiv -1\md9$ by Theorem \ref{d6}, $3$ divides $i(K)$. Similarly  since $72\equiv 0\md8$ and $35\equiv -1\md4$, by Theorem \ref{d6}, $2$ divides $i(K)$. Thus $6$ divides $i(K)$. 
Since $\ol{F(x)}=\ph_1^3\ph_2^3$ in $\F_3[x]$ with $\ph_1=x-1$ and $\ph_2=x+1$, and  each  $\ph_i$ provides $3$ prime ideals of $\Z_K$ lying above $3$ with residue degree $1$ each, we conclude that there are $6$ prime ideals of  of $\Z_K$ lying above $3$ with residue degree $1$ each. By \cite{En},  $\nu_3(i(K))=3$.
For $\nu_2(i(K))$, we have $\ol{F(x)}= (\ol{\ph_1(x)\ph_2(x)})^2$ in $\F_2[x]$, where  $\ph_1(x)=x-1$ and $\ph_2=x^2+x+1$.
As it is shown in the proof of Theorem \ref{d6}, since $\nu_2(35+1)=2$, $\ph_1$ provides a unique prime ideal of $\Z_K$ lying above $2$ with residue degree $2$ and   $\ph_2$ provides  two prime ideals of $\Z_K$ lying above $2$ with residue degree $2$ each. Thus there are  three prime ideals of $\Z_K$ lying above $2$ with residue degree $2$ each.  By \cite{En}, we conclude that  $\nu_2(i(K))=2$.
For $\nu_5(i(K))$, if $m=1$, then $\ol{F(x)}=(x+2)^5x$ in $\F_5[x]$ and  if $m=5$, then $\ol{F(x)}=(x+2)x^5$ in $\F_5[x]$. Since $F(-2)=-55$, we have $\nu_5(F(-2))=1$ and $\nu_5(F(0))=1$. Thus by Dedekind's criterion $5$ divide $(\Z_K:\Z[\al])$. For $m\in\{2,3,4\}$, $\ol{F(x)}$ has at least an irreducible factor of degree at least $2$. Thus there are at most four prime ideals of $\Z_K$ lying above $5$. Therefore $5$ does not divide $i(K)$. By \cite{C} $\nu_p(i(K))=0$ for every prime integer $p\ge 7$.  Finally $i(K)=2^2\cdot 3^3=36$.
\item  
For $F(x)=x^{6}+72 x^m+71$,   since $72\equiv 0\md9$ and $71\equiv -1\md9$ by Theorem \ref{d6}, $3$ divides $i(K)$. Similarly  since $72\equiv 0\md8$ and $71\equiv -1\md4$, by Theorem \ref{d6}, $2$ divides $i(K)$. Thus $6$ divides $i(K)$.
Since $\ol{F(x)}=\ph_1^3\ph_2^3$ in $\F_3[x]$ with $\ph_1=x-1$ and $\ph_2=x+1$, and  each  $\ph_i$ provides $3$ prime ideals of $\Z_K$ lying above $3$ with residue degree $1$ each, we conclude from \cite{En}, that $\nu_3(i(K))=3$.
For $\nu_2(i(K))$, we have $\ol{F(x)}= (\ol{\ph_1(x)\ph_2(x)})^2$ in $\F_2[x]$, where  $\ph_1(x)=x-1$ and $\ph_2=x^2+x+1$.
As it is shown in the proof of Theorem \ref{d6}, since $\nu_2(71+1)=3$, $\ph_1$ provides two prime ideals of $\Z_K$ lying above $2$ with residue degree $1$ each  and   $\ph_2$ provides  two prime ideals of $\Z_K$ lying above $2$ with residue degree $2$ each.   By \cite{En}, we conclude that  $\nu_2(i(K))=2$.
For $\nu_5(i(K))$, if $m=1$, then $\ol{F(x)}=(x+2)^5x$ in $\F_5[x]$ and  if $m=5$, then $\ol{F(x)}=(x+2)x^5$ in $\F_5[x]$. Since $F(-2)=-55$, we have $\nu_5(F(-2))=1$ and $\nu_5(F(0))=1$. Thus by Dedekind's criterion $5$ divide $(\Z_K:\Z[\al])$. For $m\in\{2,3,4\}$, $\ol{F(x)}$ has at least an irreducible factor of degree at least $2$. Thus there are at most four prime ideals of $\Z_K$ lying above $5$. Therefore $5$ does not divide $i(K)$. By \cite{C} $\nu_p(i(K))=0$ for every prime integer $p\ge 7$.  Finally $i(K)=2^2\cdot 3^3=36$.
\end{enumerate}

\end{document}